\def\ae{{\omega}}
\title{Parameter choice strategies for least-squares approximation of noisy smooth functions on the sphere}
\author{S. V. Pereverzyev\thanks{Johann Radon Institute for Computational and Applied Mathematics, Austrian Academy of Sciences, Altenbergerstrasse 69, 4040 Linz, Austria ({\tt sergei.pereverzyev@oeaw.ac.at}, {\tt pavlo.tkachenko@oeaw.ac.at}).}
        \and I. H. Sloan\thanks{School of Mathematics and Statistics, University of New South Wales, Sydney NSW 2052, Australia ({\tt i.sloan@unsw.edu.au}).}
	\and P. Tkachenko$^*$}
\begin{document}

\maketitle

\begin{abstract}
We consider a polynomial reconstruction of smooth functions from
their noisy values at discrete nodes on the unit sphere by a variant
of the regularized least-squares method of An et al., SIAM J. Numer. Anal.
50 (2012), 1513--1534. As nodes we use the points of a
positive-weight cubature formula that is exact for all spherical
polynomials of degree up to $2M$, where $M$ is the degree of the
reconstructing polynomial. We first obtain a reconstruction error
bound in terms of the regularization parameter and the penalization
parameters in the regularization operator. Then we discuss \emph{a
priori} and \emph{a posteriori} strategies for choosing these
parameters. Finally, we give numerical examples illustrating the
theoretical results.

\end{abstract}

\begin{keywords}
spherical polynomial, parameter choice strategy, regularization,
penalization, continuous function on the sphere, \emph{a posteriori} rules
\end{keywords}

\begin{AMS}
65D32, 65H10
\end{AMS}

\pagestyle{myheadings}
\thispagestyle{plain}
\markboth{S. V. PEREVERZYEV, I. H. SLOAN, AND P. TKACHENKO}{PARAMETER CHOICE STRATEGIES}

%--------------------------

% INTRODUCTION

%-------------------------

\section{Introduction}

In recent decades methods for approximation of a continuous function $y$
on the sphere
$\mathbb{S}^2:=\left\{\mathbf{x}=(x_1,x_2,x_3)^T\in\mathbb{R}^3:\
x_1^2+x_2^2+x_3^2=1\right\}$ by means of polynomials have been discussed
by many authors (see, for example, \cite{G1997, R2003, S1979, W1981}).
Often the underlying motivation has been the need to approximate
geophysical quantities. For example, such a task appears in the satellite
gravity gradiometry problem (SGG-problem) \cite{F1999}, p. 120, 262, \cite{P2008}, in which
the task is to find a spherical harmonic representation of Earth's
gravitational potential from satellite observations.
%when the Earth's gravitational potential is represented as an $M$-degree spherical
%polynomial, where the term ``$M$-degree spherical polynomial'' means the
%restriction to $\mathbb{S}^2$ of a polynomial of degree $M$ in $x_1,x_2$,
%and $x_3$.
The present study was motivated by this example. We shall return
to it several times throughout the paper.

The mathematical problem considered in this paper is to find a polynomial
approximation to $y\in C(\mathbb{S}^2)$, given noisy data values
$y^\epsilon(\mathbf{x}_i)$ at points $\mathbf{x}_i\in\mathbb{S}^2$,
$i=1,\ldots N$, using a least-squares strategy developed in \cite{A2012}.
(In the SGG application the sphere in question is determined by the
satellite orbits.  The gravitational potential at the satellite height
is smoother than at earth's surface, a complicating feature for the
inverse problem.) We shall assume, in a slight generalization of
\cite{A2012}, that the point set
$X_N:=\{\mathbf{x}_1,\ldots,\mathbf{x}_N\}$ consists of the points of
a cubature rule which is exact for all polynomials $p\in
\mathbb{P}_{2M}$, where $\mathbb{P}_M$ is the set of all spherical
polynomials of degree less than or equal to $M$, or in other words the
restriction to $\mathbb{S}^2$ of the set of all polynomials in
$\mathbb{R}^3$ of degree less than or equal to $M$.  Thus the point set
must satisfy
\begin{equation}\label{cubature}
\forall p\in\mathbb{P}_{2M},\ \ \sum_{i=1}^N w_ip(\mathbf{x}_i)=\int_{\mathbb{S}^2}p(\mathbf{x})d\omega(\mathbf{x}),
\end{equation}
where $d \omega(\mathbf{x})$ denotes area measure on $\mathbb{S}^2$, and
$w_i, i=1,\ldots,N$ are positive cubature weights associated with the
pointset $X_N$. For sufficiently large $N$ one can find in the literature
a variety of suitable cubature formulas (see, e.g., \cite{M2001, HSW2010, LM2006,
Xu2003}). Moreover, in principle the point sets for such a rule can be
generated by selecting from any sufficiently dense set of points on the
sphere, see \cite{N2006a, LM2008, G2009}.

The strategy is to take the approximant $y_M\in\mathbb{P}_M$ to be the
minimizer of the regularized discrete least-squares problem
\begin{equation}\label{leastsquares}
y_M=\arg\min\left\{\sum_{i=1}^Nw_i(p(\mathbf{x}_i)-y^\epsilon(\mathbf{x}_i))^2+\alpha\sum_{i=1}^Nw_i(R_Mp(\mathbf{x}_i))^2,\  p\in\mathbb{P}_M\right\},
\end{equation}
where $y^\epsilon(\mathbf{x}_i):=y(\mathbf{x}_i)+\epsilon_i$
represent noisy values of a perturbed version $y^\epsilon$ of the original
function $y$ calculated at the points of $X_N$, $\alpha$ is a
regularization parameter, and $R_M:\mathbb{P}_M\rightarrow\mathbb{P}_M$ is
a linear ``penalization'' operator given by
\begin{eqnarray}\label{regularizer}
R_Mp(\mathbf{x})
&:=&\sum_{k=0}^M\beta_k\frac{2k+1}{4\pi}\int_{\mathbb{S}^2}P_k(\mathbf{x}\cdot\mathbf{z})p(\mathbf{z})d\omega(\mathbf{z})\\ \nonumber
&=&\sum_{k=0}^M\beta_k\frac{2k+1}{4\pi}\sum_{i=1}^N  w_i P_k(\mathbf{x}\cdot\mathbf{x}_i)p(\mathbf{x}_i),\ \mathbf{x}\in\mathbb{S}^2,\ p\in\mathbb{P}_M,
\end{eqnarray}
where $P_k$ is the Legendre polynomial of degree $k$, and in the last step
we used (\ref{cubature}).  Here the numbers $\beta_k, k=1,\ldots,M$ are a
non-decreasing sequence of positive parameters. With $\beta_0$ fixed
in some appropriate way, the important feature of the parameters $\beta_k$
is their rate of growth. The central task in this paper will be to assign
appropriate values for the $\beta_k$.

As pointed out in \cite{A2012}, the expression in (\ref{regularizer})
is the most general rotationally invariant expression for a linear
operator on the space $\mathbb{P}_M$.

In \cite{A2012} the point set $X_N$ was taken to be a spherical
$2M$-design, which simply means that (\ref{cubature}) must hold with equal
weights $w_i=4\pi/N$.  We gain considerable freedom in this paper by
allowing general positive weights $w_i$ in (\ref{cubature}).  The only
effective difference in the present approximation scheme is that the
least-squares problem (\ref{leastsquares}) is slightly non-standard
because of the appearance of the cubature weights $w_i$.

It was observed in numerical experiments in \cite{A2012} that a
proper choice of the penalization operator $R_M$ together with the
regularization parameter $\alpha$ can significantly improve the
approximation. However, the choice of the model parameters in
(\ref{regularizer}) was not settled, and still remains an open issue. In
our paper we will tackle this crucial question by proposing
parameter choice strategies (strategies for choosing $\beta_k$ and
$\alpha$) that allow good approximation of noisy smooth functions on the
sphere.

The paper is organized as follows: in the next section we present
necessary preliminaries, and give an explicit solution of the regularized
least-squares problem.. In Section 3 we derive theoretical error bounds
for the resulting approximation. Sections 4 and 5 discuss error bounds and
parameter choice strategies. Finally, in the last section we present some
numerical experiments that test the theoretical results from previous
sections.

\section{Preliminaries}

We introduce a real spherical harmonic basis for $\mathbb{P}_M$, see
\cite{M1966}
\[
\left\{Y_{k,j}:\ k=0,1,...,M,\ j=1,...,2k+1\right\},
\]
assumed to be orthonormal with respect to the standard $L^2$ inner
product,

\[
\left\langle f,g\right\rangle_{L^2(\mathbb{S}^2)} :=\int_{\mathbb{S}^2}f(\mathbf{x})g(\mathbf{x}) d\omega(\mathbf{x}).
\]
Then for $p\in\mathbb{P}_M$ an arbitrary spherical polynomial of
degree $\leq M$ there exists a unique vector
$\gamma=(\gamma_{k,j})\in\mathbb{R}^{(M+1)^2}$ such that
\begin{equation}\label{eq:1}
p(\mathbf{x})=\sum_{k=0}^M\sum_{j=1}^{2k+1}\gamma_{k,j}Y_{k,j}(\mathbf{x}),\ \ \mathbf{x}\in\mathbb{S}^2.
\end{equation}

The addition theorem for spherical harmonics (see \cite{M1966}),
which asserts
\begin{equation}\label{addition}
\sum_{j=1}^{2k+1}Y_{k,j}(\textbf{x})Y_{k,j}(\textbf{z}) = \frac{2k+1}{4\pi}P_k(\textbf{x}\cdot\textbf{z}), \quad \textbf{x},\textbf{z}\in\mathbb{S}^2,
\end{equation}
will play an important role.

The assumption that a function $y$ on the unit sphere is continuous
implies that $y\in L^2(\mathbb{S}^2)$, and hence that its Fourier
coefficients $\left\langle Y_{k,j},y\right\rangle_{L^2(\mathbb{S}^2)}$
with respect to the basis of spherical harmonics  are square-summable,
i.e.
\[
\sum_{k=0}^\infty\sum_{j=1}^{2k+1}\left|\left\langle Y_{k,j},y\right\rangle_{L^2(\mathbb{S}^2)}\right|^2<\infty.
	\]

To measure any additional smoothness of $y$ it is convenient to introduce
a Hilbert space $W^{\phi,\beta}$ that is especially tailored to the
particular problem, namely
\[
y\in W^{\phi,\beta}:=\left\{g: \left\|g\right\|^2_{W^{\phi,\beta}}:=\sum_{k=0}^\infty\sum_{j=1}^{2k+1}\frac{\left|\left\langle
Y_{k,j},g\right\rangle_{L^2(\mathbb{S}^2)}\right|^2}{\phi^2(\beta_k^{-2})}<\infty\right\},
\]
where $\phi$ is an non-decreasing function such that $\phi(0)=0$ and
$\beta=\left\{\beta_0,\beta_1,...,\beta_M,\ldots\right\}$ is the sequence of
coefficients appearing in the regularizer (\ref{regularizer}). In the
literature, see, e.g., \cite{L2013}, the function $\phi$ goes under the
name of \emph{smoothness index function}.

In this context the smoothness of $y$ is encoded in $\phi$ and
$\beta$. %influencing the best approximation of $y$ by spherical
%polynomials $p\in\mathbb{P}_m$.
For example, if the smoothness index function $\phi(t)$ and the sequence
$\beta=\left\{\beta_k\right\}$ increase polynomially with $t$ and $k$
such that $\phi(t)=O\left(t^{\nu_1}\right), \beta_k=O\left(k^{\nu_2}\right), \nu_1\nu_2>1/2$, then the space $W^{\phi,\beta}$ becomes a spherical Sobolev space $H_{2\nu_1\nu_2}$ (see, e.g., \cite{F1999}, p. 64), and a spherical analog of the fundamental lemma due to Sobolev (see \cite{F1999}, Lemma 2.1.5) says that $H_{2\nu_1\nu_2}$ is embedded in the space $C^{(\nu)}(\mathbb{S}^2)$ of functions, which have $\nu$ continuous derivatives on $\mathbb{S}^2, \nu<2\nu_1\nu_2-1$, and are the restrictions to $\mathbb{S}^2$ of functions satisfying the Laplace equation in the outer space of $\mathbb{S}^2$ and being regular at infinity. Then Jackson's theorem on the sphere (see \cite{R1971}, Theorem 3.3) tells
us that for $y\in W^{\phi,\beta}$, there holds

\begin{equation}\label{eq:3}
\inf_{p\in\mathbb{P}_M}\left\|y-p\right\|_{C(\mathbb{S}^2)}=O\left(M^{-\nu}\right), \ \nu<2\nu_1\nu_2-1.
\end{equation}

On the other hand, if the sequence $\beta=\left\{\beta_k\right\}$
increases exponentially then for polynomially increasing $\phi$ and $y\in
W^{\phi,\beta}$ we have
\[
\inf_{p\in\mathbb{P}_M}\left\|y-p\right\|_{C(\mathbb{S}^2)}=O\left(e^{-qM}\right),
\]
where $q$ is some positive number that does not depend on $M$.

In the error analysis later in the paper we make use of a linear
polynomial approximation that in a certain precise sense mimics best
approximation in the space of spherical polynomials of half the
degree. The approximation, studied in \cite{Mh2005, F2008, S2011}, approximates a
function $y\in C(\mathbb{S}^2)$ by  $V_M y\in\mathbb{P}_M$ defined by
\begin{eqnarray}\label{V_M}
V_M y(\mathbf{x})&:=\sum_{k=0}^M h\left(\frac{k}{M}\right)\sum_{j=1}^{2k+1}Y_{k,j}(\mathbf{x})\left\langle Y_{k,j},y\right\rangle_{L^2(\mathbb{S}^2)}\\
&=\sum_{k=0}^M h\left(\frac{k}{M}\right)\frac{2k+1}{4\pi}\int_{\mathbb{S}^2}P_k(\mathbf{x}\cdot\mathbf{z})y(\mathbf{z})d\omega(\mathbf{z}),\nonumber
\end{eqnarray}
where $h$ is a real-valued function on $\mathbb{R}^+$, called a
filter function, which satisfies
\begin{equation}\label{hspec}
h(t)\in[0,1]\,\forall\, t\in \mathbb{R}^+,\quad h(t) = \left\{
  \begin{array}{l l}
    1, & \quad t\in\left[0,1/2\right],\\
    0, & \quad t\in\left(1,\infty\right).
  \end{array} \right.
\end{equation}
It is shown in \cite{S2011} that for suitable choices of the filter $h$
(including  any filter in $C^3(\mathbb{R}^+)$, or the unique $C^1$
quadratic spline with breakpoints at 1/2, 3/4 and 1 that satisfies
(\ref{hspec})), the norm of the operator $V_M$ as an operator from
$\mathbb{P}_M$ to $C(\mathbb{S}^2)$ is bounded independently of $M$.
Since, as is easily seen, $V_M$ reproduces polynomials of degree less than
or equal to $M/2$, it follows in the usual way that
\[
\left\|y-V_My\right\|_{C(\mathbb{S}^2)}\leq c\inf_{p\in\mathbb{P}_{\left[ M/2\right]}}\left\|y-p\right\|_{C(\mathbb{S}^2)},
\]
where $\left[\cdot\right]$ denotes the floor function.  (In this paper $c$
is a generic constant, which may take different values at different
occurrences.)  In view of (\ref{eq:3}), for polynomially increasing
$\phi, \beta$ and $y\in W^{\phi,\beta}$ we have
\[
\left\|y-V_My\right\|_{C(\mathbb{S}^2)}\leq c\left[M/2\right]^{-\nu}\leq cM^{-\nu}.
\]

On the other hand, for exponentially increasing $\beta$ and polynomially
increasing $\phi$ the theory \cite{R2000} suggests taking $h(t) = 1$
for  $t\in\left[0,1\right]$ (in which case $V_M y$ is just the
$M$th-degree partial sum of the Fourier-Laplace series). Then for
$y\in W^{\phi,\beta}$ there holds
\[
\left\|y-V_My\right\|_{C(\mathbb{S}^2)}\leq c\sqrt{M}\inf_{p\in\mathbb{P}_M}\left\|y-p\right\|_{C(\mathbb{S}^2)}\leq c\sqrt{M}e^{-qM}.
\]

\section{Weighted regularized least-squares problem and its solution}

The penalization operator (\ref{regularizer}) can equivalently be written,
using the addition theorem (\ref{addition}) and (\ref{eq:1}), as
\begin{eqnarray}\label{equivalentR}
R_Mp(\mathbf{x})&=&\sum_{k=0}^M\beta_k\sum_{j=1}^{2k+1}Y_{k,j}(\mathbf{x})\left\langle Y_{k,j},p \right\rangle_{L^2(\mathbb{S}^2)}\\ 
&=&\sum_{k=0}^M\beta_k\sum_{j=1}^{2k+1}\gamma_{k,j}Y_{k,j}(\textbf{x}),\nonumber
\end{eqnarray}
allowing us to write the minimization problem as one of linear algebra.
For the noisy function $y^\epsilon$ defined on $\mathbb{S}^2$, let
$\mathbf{y^\epsilon}:=\mathbf{y^\epsilon}(X_N)$ be the column vector
\[
\mathbf{y^\epsilon}=\left[y^\epsilon(\mathbf{x}_1),...,y^\epsilon(\mathbf{x}_N)\right]^T\ \in \mathbb{R}^N,
\]
and let $\mathbf{Y_M}:=\mathbf{Y_M}(X_N)\in\mathbb{R}^{(M+1)^2\times N}$
be the matrix of spherical harmonics evaluated at the points of $X_N$.
Using this notation we can reduce the minimization problem in
(\ref{leastsquares}) to the following discrete minimization problem:

\begin{equation}\label{eq:10}
\min_{\boldsymbol{\gamma}\in\mathbb{R}^{(M+1)^2}}\left\|\mathbf{W}^{1/2}\mathbf{Y_M}^T\boldsymbol{\gamma}-\mathbf{W}^{1/2}\mathbf{y^\epsilon}\right\|^2_2+\alpha\left\|\mathbf{W}^{1/2}\mathbf{R_M}^T\boldsymbol{\gamma}\right\|^2_2,\ \alpha>0,
\end{equation}
where $\left\|\cdot\right\|_2$ is the standard Euclidean vector norm, $\mathbf{R_M}:=\mathbf{R_M}(X_N)=\mathbf{B_M}\mathbf{Y_M}\in\mathbb{R}^{(M+1)^2\times N}$, $\mathbf{B_M}$ is a positive diagonal matrix defined by

\begin{equation}\label{eq:11}
\mathbf{B_M}:=\diag(\beta_0,\underbrace{\beta_1,\beta_1,\beta_1}_{3},...,\underbrace{\beta_M,\beta_M,...,\beta_M}_{2M+1})\in\mathbb{R}^{(M+1)^2\times(M+1)^2},
\end{equation}
and $\mathbf{W}$ is a diagonal matrix of cubature weights
\[
\mathbf{W}:=\diag(w_1,...,w_N)\in\mathbb{R}^{N\times N}.
\]

The solution of (\ref{leastsquares}) can be found from the following
system of linear equations

\begin{equation}\label{eq:12}
(\mathbf{Y_M}\mathbf{W}\mathbf{Y_M}^T+\alpha\mathbf{B_M}\mathbf{Y_M}\mathbf{W}\mathbf{Y_M}^T\mathbf{B_M})\gamma=\mathbf{Y_M}\mathbf{W}\mathbf{y^\epsilon}.
\end{equation}

\begin{theorem} \label{th:solution}
Assume $y^\epsilon\in C(\mathbb{S}^2)$. Let $M>0$ be given, and let
(\ref{cubature}) hold true for the set of points $X_N$. Then (\ref{eq:12})
has the unique solution $\boldsymbol{\gamma}=(\gamma_{k,j})\in\mathbb{R}^{(M+1)^2}$,

\begin{equation}\label{eq:13}
\gamma_{k,j}=\frac{1}{1+\alpha\beta_k^2}\sum_{i=1}^N w_i Y_{k,j}(\mathbf{x}_i)y^\epsilon(\mathbf{x}_i),
\end{equation}
and the minimizer of (\ref{leastsquares}) is given by

\begin{eqnarray}\label{eq:14}
y_M(\mathbf{x})=T_{\alpha,M}^\beta y^\epsilon(\mathbf{x}):&=&\sum_{k=0}^M\sum_{j=1}^{2k+1}\frac{Y_{k,j}(\mathbf{x})}{1+\alpha\beta_k^2} \sum_{i=1}^N w_iY_{k,j}(\mathbf{x}_i)y^\epsilon(\mathbf{x}_i)\\
&=&\sum_{k=0}^M\frac{2k+1}{4\pi}\frac{1}{1+\alpha\beta_k^2} \sum_{i=1}^N w_iP_{k}(\mathbf{x}\cdot\mathbf{x}_i)y^\epsilon(\mathbf{x}_i).\nonumber
\end{eqnarray}

\end{theorem}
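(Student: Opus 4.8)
The plan is to exploit the cubature exactness (\ref{cubature}) to turn the normal equations (\ref{eq:12}) into a diagonal system. The key observation is that for any indices $(k,j)$ and $(k',j')$ with $k,k'\le M$, the product $Y_{k,j}Y_{k',j'}$ lies in $\mathbb{P}_{2M}$, so the cubature rule integrates it exactly. Combined with the $L^2$-orthonormality of the spherical harmonic basis, this gives
\[
\sum_{i=1}^N w_i Y_{k,j}(\mathbf{x}_i)Y_{k',j'}(\mathbf{x}_i)=\left\langle Y_{k,j},Y_{k',j'}\right\rangle_{L^2(\mathbb{S}^2)}=\delta_{kk'}\delta_{jj'},
\]
which is precisely the matrix identity $\mathbf{Y_M}\mathbf{W}\mathbf{Y_M}^T=\mathbf{I}$ on $\mathbb{R}^{(M+1)^2}$.

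With this identity in hand, the coefficient matrix in (\ref{eq:12}) collapses: the expression $\mathbf{Y_M}\mathbf{W}\mathbf{Y_M}^T+\alpha\mathbf{B_M}\mathbf{Y_M}\mathbf{W}\mathbf{Y_M}^T\mathbf{B_M}$ becomes $\mathbf{I}+\alpha\mathbf{B_M}^2$. Since $\mathbf{B_M}$ is the diagonal matrix (\ref{eq:11}) carrying the value $\beta_k$ on the block of length $2k+1$, the resulting matrix is diagonal with $(k,j)$-entry $1+\alpha\beta_k^2>0$. It is therefore invertible, establishing both existence and uniqueness, and the system is solved simply by dividing component-wise.

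To finish, I would note that the right-hand side of (\ref{eq:12}) has $(k,j)$-entry $\big(\mathbf{Y_M}\mathbf{W}\mathbf{y^\epsilon}\big)_{k,j}=\sum_{i=1}^N w_i Y_{k,j}(\mathbf{x}_i)y^\epsilon(\mathbf{x}_i)$; dividing by $1+\alpha\beta_k^2$ yields (\ref{eq:13}) immediately. Substituting these coefficients into the expansion (\ref{eq:1}) gives the first line of (\ref{eq:14}), and applying the addition theorem (\ref{addition}) to the inner sum over $j$ converts the spherical harmonics into Legendre polynomials, producing the second, rotationally invariant form. The entire argument rests on the diagonalization step; everything after it is routine component-wise algebra, so I anticipate no genuine obstacle --- the one thing to get right is the recognition that $2M$-exactness makes the discrete inner product coincide exactly with the $L^2$ inner product on $\mathbb{P}_M$.
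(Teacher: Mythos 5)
Your argument is exactly the paper's own proof: the cubature exactness for $\mathbb{P}_{2M}$ gives $\mathbf{Y_M}\mathbf{W}\mathbf{Y_M}^T=\mathbf{I}$, the normal equations collapse to the diagonal system $(\mathbf{I}+\alpha\mathbf{B_M}^2)\boldsymbol{\gamma}=\mathbf{Y_M}\mathbf{W}\mathbf{y^\epsilon}$, and the two forms of (\ref{eq:14}) follow from (\ref{eq:1}) and the addition theorem. The proposal is correct and complete.
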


\begin{proof}
On using (\ref{cubature}) we have
\[
\sum_{i=1}^N w_iY_{k,j}(\mathbf{x}_i)Y_{\kappa,\iota}(\mathbf{x}_i)=
\left\langle Y_{k,j},Y_{\kappa,\iota}\right\rangle_{L^2(\mathbb{S}^2)}=\delta_{k,\kappa}\delta_{j,\iota},
\]
where $k,\kappa=0,...,M,\ j=1,...,2k+1,\ \iota=1,...,2\kappa+1$.  Thus
$\mathbf{Y_M}\mathbf{W}\mathbf{Y_M}^T$ is the identity matrix. Since
$\mathbf{B_M}$ and $\mathbf{W}$ are diagonal matrices, the solution of
(\ref{eq:12}) is given by (\ref{eq:13}) and from (\ref{eq:1}) we obtain
(\ref{eq:14}). \qquad\end{proof}

\textbf{Remark 3.1}. Note that one can also employ fast iterative algorithms for scattered least squares \cite{K2007} to find the minimizer (\ref{leastsquares}). Moreover, the evaluation of the coefficients (\ref{eq:13}) can be realized with fast spherical Fourier transform presented in \cite{K2008}.
%--------------------------

% Section

%-------------------------

\section{Error bounds}

In this section we estimate the uniform error of approximation of $y$ by
$y_M$, see (\ref{eq:14}). It is convenient here to regard
$y^\epsilon$ as a continuous function on $\mathbb{S}^2$, constructed by
some interpolation process from its values on the discrete set $X_N$.  The
operator $T_{\alpha,M}^\beta$ defined in (\ref{eq:14}) can then be
considered as an operator on the space $C(\mathbb{S}^2)$. Since
$y_M=T_{\alpha,M}^{\beta} y^\epsilon$ it is clear that
\[
y-y_M=y-T_{\alpha,M}^\beta V_M y+T_{\alpha,M}^\beta(V_M y - y + y - y^\epsilon),
\]
and hence
%$\left\|y-y_M\right\|_{C(\mathbb{S}^2)}=\left\|y-T_{\alpha,M}^{\beta}
%y^\epsilon\right\|_{C(\mathbb{S}^2)}$. It is clear that
\begin{eqnarray}\label{eq:15}
\left\|y-y_M\right\|_{C(\mathbb{S}^2)}&\leq&\left\|y-T_{\alpha,M}^{\beta} V_My\right\|_{C(\mathbb{S}^2)} \\ \nonumber
&+&\left\|T_{\alpha,M}^{\beta}\right\|_{C(\mathbb{S}^2)}\left(\left\|y-V_My\right\|_{C(\mathbb{S}^2)}+\left\|y-y^\epsilon\right\|_{C(\mathbb{S}^2)}\right),
\end{eqnarray}
where $\left\|T_{\alpha,M}^{\beta}\right\|_{C(\mathbb{S}^2)}$ is the norm of the operator $T_{\alpha,M}^{\beta}:C(\mathbb{S}^2)\rightarrow C(\mathbb{S}^2)$.

Let $\epsilon=\left[\epsilon_1,\epsilon_2,...,\epsilon_N\right] \in
\mathbb{R}^N$, and
$\left\|\epsilon\right\|_\infty=\max\left|\epsilon_i\right|$. It is
natural to assume, and from now on we shall do so, that
$\left\|y-y^\epsilon\right\|_{C(\mathbb{S}^2)}=
\left\|\epsilon\right\|_\infty$.  This means that we adopt the deterministic noise model, which allows the worst noise level at any point of $X_N$. Then it is also natural to assume
that $M$ is large enough to
ensure
$\left\|y-V_My\right\|_{C(\mathbb{S}^2)}\le\left\|\epsilon\right\|_\infty$,
since otherwise data noise is dominated by the approximation error and no
regularization is required. Then the bound (\ref{eq:15}) can be reduced to
\begin{equation}\label{eq:16}
\left\|y-y_M\right\|_{C(\mathbb{S}^2)}\leq\left\|y-T_{\alpha,M}^{\beta}V_My\right\|_{C(\mathbb{S}^2)}+2\left\|T_{\alpha,M}^{\beta}\right\|_{C(\mathbb{S}^2)}\left\|\epsilon\right\|_\infty.
\end{equation}

We will call the first term of the right-hand side in (\ref{eq:16}) the
\emph{regularization error} and the second the \emph{noise
propagation error}.

The noise propagation error can be quantified by the following result
for the norm of $T_{\alpha, M}^\beta$,
%$\left\|T_{\alpha,M}^{\beta}\right\|_{C(\mathbb{S}^2)}\left\|\epsilon\right\|_\infty$,
which is a consequence of (\ref{eq:14}).
\begin{theorem} \label{th:regerr}
Under the conditions of Theorem \ref{th:solution}

\begin{eqnarray}\label{eq:17}
\left\|T_{\alpha,M}^{\beta}\right\|_{C(\mathbb{S}^2)}&=&\max_{\mathbf{x}\in\mathbb{S}^2}\sum_{i=1}^N w_i
\left| \sum_{k=0}^M \frac{2k+1}{4\pi(1+\alpha\beta_k^2)}P_k(\mathbf{x}\cdot\mathbf{x}_i)\right|\\ \nonumber
& \le &\max_{\mathbf{x}\in\mathbb{S}^2}\sum_{i=1}^N w_i\sum_{k=0}^M\frac{2k+1}{4\pi(1+\alpha\beta_k^2)}\left|P_k(\mathbf{x}\cdot\mathbf{x}_i)\right|.
\end{eqnarray}

\end{theorem}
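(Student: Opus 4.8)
The plan is to recognize $T_{\alpha,M}^{\beta}$ as a finite-rank operator whose action on a continuous function $f$ depends only on the values $f(\mathbf{x}_i)$ at the nodes, and then to compute its $C(\mathbb{S}^2)\to C(\mathbb{S}^2)$ norm by the standard argument that the norm of such an operator equals the maximum over $\mathbf{x}$ of the weighted $\ell^1$-norm of its kernel. First I would introduce the kernel
\[
G_M(\mathbf{x},\mathbf{x}_i):=\sum_{k=0}^M\frac{2k+1}{4\pi(1+\alpha\beta_k^2)}P_k(\mathbf{x}\cdot\mathbf{x}_i),
\]
so that, by (\ref{eq:14}), every $f\in C(\mathbb{S}^2)$ satisfies $T_{\alpha,M}^{\beta}f(\mathbf{x})=\sum_{i=1}^N w_i\,G_M(\mathbf{x},\mathbf{x}_i)\,f(\mathbf{x}_i)$.

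For the upper bound, since the weights $w_i$ are positive, the triangle inequality gives, for every $\mathbf{x}\in\mathbb{S}^2$ and every $f$ with $\|f\|_{C(\mathbb{S}^2)}\le 1$, the estimate $|T_{\alpha,M}^{\beta}f(\mathbf{x})|\le\sum_{i=1}^N w_i|G_M(\mathbf{x},\mathbf{x}_i)|$. Taking the maximum over $\mathbf{x}$ and then the supremum over such $f$ yields $\|T_{\alpha,M}^{\beta}\|_{C(\mathbb{S}^2)}\le\max_{\mathbf{x}}\sum_{i=1}^N w_i|G_M(\mathbf{x},\mathbf{x}_i)|$. For the matching lower bound I would note that $\mathbf{x}\mapsto\sum_{i=1}^N w_i|G_M(\mathbf{x},\mathbf{x}_i)|$ is continuous on the compact set $\mathbb{S}^2$, hence attains its maximum at some $\mathbf{x}^{\ast}$. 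I then set $s_i:=\operatorname{sign}G_M(\mathbf{x}^{\ast},\mathbf{x}_i)\in\{-1,0,1\}$ and construct $f^{\ast}\in C(\mathbb{S}^2)$ with $\|f^{\ast}\|_{C(\mathbb{S}^2)}\le 1$ and $f^{\ast}(\mathbf{x}_i)=s_i$ for all $i$; evaluating at $\mathbf{x}^{\ast}$ gives $T_{\alpha,M}^{\beta}f^{\ast}(\mathbf{x}^{\ast})=\sum_i w_i|G_M(\mathbf{x}^{\ast},\mathbf{x}_i)|$, whence $\|T_{\alpha,M}^{\beta}\|_{C(\mathbb{S}^2)}\ge\max_{\mathbf{x}}\sum_i w_i|G_M(\mathbf{x},\mathbf{x}_i)|$. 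Together with the upper bound, this establishes the asserted equality in (\ref{eq:17}).

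The main obstacle is the construction of the test function $f^{\ast}$: I need a continuous function on the sphere attaining the prescribed sign values $s_i$ at the finitely many \emph{distinct} nodes while keeping $\|f^{\ast}\|_{C(\mathbb{S}^2)}\le 1$. Since the $\mathbf{x}_i$ are distinct, I would build $f^{\ast}$ by a partition-of-unity (Urysohn-type) argument: choose pairwise disjoint open caps around the nodes and continuous bump functions $\chi_i$ supported in them with $0\le\chi_i\le 1$ and $\chi_i(\mathbf{x}_j)=\delta_{ij}$, and set $f^{\ast}=\sum_i s_i\chi_i$. The disjointness of supports guarantees both $f^{\ast}(\mathbf{x}_j)=s_j$ and $\|f^{\ast}\|_{C(\mathbb{S}^2)}\le 1$, and the identity $t\,\operatorname{sign}(t)=|t|$ (valid also at $t=0$) gives the claimed value at $\mathbf{x}^{\ast}$. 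Finally, the stated inequality in (\ref{eq:17}) follows immediately by bounding $|G_M(\mathbf{x},\mathbf{x}_i)|\le\sum_{k=0}^M\frac{2k+1}{4\pi(1+\alpha\beta_k^2)}|P_k(\mathbf{x}\cdot\mathbf{x}_i)|$ through the triangle inequality and summing against the positive weights $w_i$.
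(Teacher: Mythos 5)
Your proof is correct and follows exactly the route the paper intends: the paper states Theorem \ref{th:regerr} without proof as ``a consequence of (\ref{eq:14})'', and your argument---reading off the kernel $G_M(\mathbf{x},\mathbf{x}_i)$ from (\ref{eq:14}), obtaining the upper bound via the triangle inequality with the positive weights $w_i$, and achieving it with a norm-one continuous function taking the prescribed sign values at the distinct nodes---is precisely the standard Lebesgue-constant computation that fills in those details. The bump-function construction of $f^{\ast}$ and the final triangle-inequality step for the stated upper bound are both sound.
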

Theorem \ref{th:regerr} reduces to Proposition 5.1 in \cite{A2012} on
setting $w_i=4\pi/N$, but note that the result as stated in \cite{A2012}
corresponds to the upper bound in (\ref{eq:17}), and so is not
correctly stated.

Now we are going to bound the regularization error $\left\|y-T_{\alpha,M}^{\beta}V_My\right\|_{C(\mathbb{S}^2)}$. We start with the following decomposition

\begin{equation}\label{eq:18}
y-T_{\alpha,M}^{\beta}V_My=y-T_{0,M}V_My+(T_{0,M}-T_{\alpha,M}^{\beta})V_My,
\end{equation}
where $T_{0,M}$ is the so-called hyperinterpolation operator
\cite{S1995},
\begin{eqnarray}\label{eq:19}
T_{0,M}g(\mathbf{x})%&:=&\arg\min\left\{\sum_{i=1}^Nw_i(p(\mathbf{x}_i)-y(\mathbf{x}_i))^2,\  p\in\mathbb{P}_M\right\} \\ \nonumber
&=&\sum_{k=0}^M\sum_{j=1}^{2k+1}Y_{k,j}(\mathbf{x}) \sum_{i=1}^N w_iY_{k,j}(\mathbf{x}_i)g(\mathbf{x}_i)\\ \nonumber
&=&\sum_{k=0}^M \frac{2k+1}{4\pi} \sum_{i=1}^N w_i P_k(\mathbf{x}\cdot \mathbf{x}_i)g(\mathbf{x}_i).
\end{eqnarray}

From (\ref{eq:19}) and (\ref{cubature}) it immediately follows that
for any $p\in\mathbb{P}_M$ we have $T_{0,M}p=p$. Therefore,
$T_{0,M}V_My=V_My$. In view of this property and the decomposition
(\ref{eq:18}) we can derive a bound for the regularization error

\begin{eqnarray}\label{eq:20}
\left\|y-T_{\alpha,M}^{\beta}V_My\right\|_{C(\mathbb{S}^2)}&\leq& \left\|y-V_My\right\|_{C(\mathbb{S}^2)}+\left\|(T_{0,M}-T_{\alpha,M}^{\beta})V_My\right\|_{C(\mathbb{S}^2)} \\ \nonumber
&\leq& \left\|\epsilon\right\|_\infty+\left\|(T_{0,M}-T_{\alpha,M}^{\beta})V_My\right\|_{C(\mathbb{S}^2)}.
\end{eqnarray}

An estimate of the term $\left\|
(T_{0,M}-T_{\alpha,M}^{\beta})V_M\right\|_{C(\mathbb{S}^2)}$ in
(\ref{eq:20}) is given by the following theorem.

\begin{theorem} \label{th:apb}
Assume that the smoothness index function $\phi$ is such that the function
$t\rightarrow t/\phi(t)$ is monotone. Then for $y\in W^{\phi,\beta}$
there holds

\begin{equation}\label{eq:21}
\left\|(T_{0,M}-T_{\alpha,M}^{\beta})V_My\right\|_{C(\mathbb{S}^2)}\leq cM\hat{\phi}(\alpha)\left\|y\right\|_{W^{\phi,\beta}},
\end{equation}
where $\hat{\phi}(\alpha)=\phi(\alpha)$ if $t/\phi(t)$ is
non-decreasing, and $\hat{\phi}(\alpha)=\alpha$ if $t/\phi(t)$ is
non-increasing.
\end{theorem}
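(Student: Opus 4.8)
The plan is to reduce everything to the scalar spherical-harmonic coefficients and then carry out a pointwise Cauchy--Schwarz estimate. First I would compute the difference operator explicitly: since $T_{0,M}$ and $T_{\alpha,M}^\beta$ differ only through the spectral multipliers $1$ and $1/(1+\alpha\beta_k^2)$, their difference carries the multiplier $1-1/(1+\alpha\beta_k^2)=\alpha\beta_k^2/(1+\alpha\beta_k^2)$. Applying this to $V_My\in\mathbb{P}_M$ and noting that $Y_{k,j}\,V_My\in\mathbb{P}_{2M}$, the cubature exactness (\ref{cubature}) lets me replace each discrete inner product $\sum_i w_iY_{k,j}(\mathbf{x}_i)V_My(\mathbf{x}_i)$ by $\left\langle Y_{k,j},V_My\right\rangle_{L^2(\mathbb{S}^2)}=h(k/M)\left\langle Y_{k,j},y\right\rangle_{L^2(\mathbb{S}^2)}$. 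This gives the clean representation
\[
(T_{0,M}-T_{\alpha,M}^{\beta})V_My(\mathbf{x})=\sum_{k=0}^M h\!\left(\tfrac{k}{M}\right)\frac{\alpha\beta_k^2}{1+\alpha\beta_k^2}\sum_{j=1}^{2k+1}\left\langle Y_{k,j},y\right\rangle_{L^2(\mathbb{S}^2)}\,Y_{k,j}(\mathbf{x}).
\]

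Next, to pass from this expansion to the $C(\mathbb{S}^2)$ norm, I would split each coefficient as $\left\langle Y_{k,j},y\right\rangle_{L^2(\mathbb{S}^2)}/\phi(\beta_k^{-2})$ times $\phi(\beta_k^{-2})$ and apply the Cauchy--Schwarz inequality over the index pairs $(k,j)$, pointwise in $\mathbf{x}$. The first resulting factor is bounded by $\left\|y\right\|_{W^{\phi,\beta}}$ (it is a partial sum of that norm). In the second factor the sum over $j$ of $Y_{k,j}^2(\mathbf{x})$ collapses, by the addition theorem (\ref{addition}) with $P_k(\mathbf{x}\cdot\mathbf{x})=P_k(1)=1$, to $\frac{2k+1}{4\pi}$, which is independent of $\mathbf{x}$; this is exactly what removes the $\mathbf{x}$-dependence and turns the estimate into a genuine uniform bound.

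The heart of the argument is then a scalar ``qualification'' estimate for the Tikhonov-type multiplier: writing $s_k=\beta_k^{-2}$, I must show $\frac{\alpha\beta_k^2}{1+\alpha\beta_k^2}\phi(\beta_k^{-2})=\frac{\alpha}{s_k+\alpha}\phi(s_k)\le c\,\hat\phi(\alpha)$. When $t/\phi(t)$ is non-decreasing I would treat $s_k\le\alpha$ and $s_k>\alpha$ separately, using monotonicity of $\phi$ in the first range and the inequality $\phi(s_k)/s_k\le\phi(\alpha)/\alpha$ in the second, to obtain the bound $\phi(\alpha)$ globally. When $t/\phi(t)$ is non-increasing this quotient runs the wrong way, so here I would exploit that $\beta_k$ is non-decreasing, hence $s_k=\beta_k^{-2}\le\beta_0^{-2}$ is uniformly bounded; then $\frac{\alpha}{s_k+\alpha}\phi(s_k)\le\alpha\,\phi(s_k)/s_k\le\alpha\,\beta_0^2\phi(\beta_0^{-2})$, i.e.\ $c\alpha$ with a constant depending only on the fixed $\beta_0$. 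Recognizing that the saturation case forces this boundedness, and absorbing the $\beta_0$-dependence into the generic constant, is the one genuinely delicate point.

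Finally I would substitute $\frac{\alpha}{s_k+\alpha}\phi(s_k)\le c\hat\phi(\alpha)$ back into the second Cauchy--Schwarz factor, use $h\le 1$, and evaluate the remaining weight sum $\sum_{k=0}^M\frac{2k+1}{4\pi}=\frac{(M+1)^2}{4\pi}$; its square root produces the factor of order $M$ in (\ref{eq:21}). Combining the two factors yields $\left\|(T_{0,M}-T_{\alpha,M}^{\beta})V_My\right\|_{C(\mathbb{S}^2)}\le cM\hat\phi(\alpha)\left\|y\right\|_{W^{\phi,\beta}}$, as claimed. The main obstacle is the scalar multiplier estimate in the non-increasing case, where the inequality survives only because the arguments $\beta_k^{-2}$ stay below $\beta_0^{-2}$; everything else is a routine combination of the addition theorem, cubature exactness, and Cauchy--Schwarz.
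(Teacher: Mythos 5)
Your proposal is correct, and it reaches the same bound as the paper, but it is more self-contained in two places where the paper invokes external results. The paper's proof begins exactly as you do: cubature exactness plus the filter property turn $(T_{0,M}-T_{\alpha,M}^{\beta})V_My$ into the spectral sum $\sum_{k=0}^M\sum_{j=1}^{2k+1}h(k/M)\,\tfrac{\alpha\beta_k^2}{1+\alpha\beta_k^2}\langle Y_{k,j},y\rangle_{L^2(\mathbb{S}^2)}Y_{k,j}$. From there the paper cites the Nikolskii inequality (\cite{N2006}, Prop.~2.5) to pass from the $C(\mathbb{S}^2)$ norm to $cM$ times the $L^2$ norm, applies Parseval, and then bounds the resulting weighted sum by $\sup_{u\in[0,\beta_0^{-2}]}\left|\tfrac{\alpha}{\alpha+u}\phi(u)\right|\,\|y\|_{W^{\phi,\beta}}$, finishing with a citation of \cite{L2013}, Prop.~2.7, for $\sup_u\tfrac{\alpha}{\alpha+u}\phi(u)\le c\hat\phi(\alpha)$. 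You instead do a single pointwise Cauchy--Schwarz over $(k,j)$ and collapse $\sum_jY_{k,j}^2(\mathbf{x})$ to $\tfrac{2k+1}{4\pi}$ via the addition theorem; since $\bigl(\sum_{k=0}^M\tfrac{2k+1}{4\pi}\bigr)^{1/2}=\tfrac{M+1}{2\sqrt{\pi}}$, this simultaneously proves the needed Nikolskii-type estimate and distributes the multiplier into the correct factor. Your two-case monotonicity argument for $\tfrac{\alpha}{s+\alpha}\phi(s)\le c\hat\phi(\alpha)$ on $s\in(0,\beta_0^{-2}]$ is a correct inline replacement for the cited Proposition 2.7, including the key observation that in the non-increasing case the bound $c\alpha$ only survives because $s_k=\beta_k^{-2}\le\beta_0^{-2}$ is uniformly bounded (the constant absorbing $\beta_0^2\phi(\beta_0^{-2})$), which is precisely why the paper restricts the supremum to $[0,\beta_0^{-2}]$. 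In short: same skeleton, with the two black-box citations replaced by elementary arguments; your version is longer but makes the origin of the factor $M$ and of the saturation at $\hat\phi(\alpha)=\alpha$ completely explicit.
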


\begin{proof}
In view of (\ref{eq:14}), (\ref{eq:19}) and (\ref{V_M}), together with the
fact that the cubature formula in (\ref{cubature}) is exact for
$p\in\mathbb{P}_{2M}$, we may write
\begin{eqnarray*}
&&\left\|(T_{0,M}-T_{\alpha,M}^{\beta})V_My\right\|_{C(\mathbb{S}^2)}\\ \nonumber
&&=\left\|\sum_{k=0}^M\sum_{j=1}^{2k+1}Y_{k,j}(\cdot)\frac{\alpha\beta_k^2}{1+\alpha\beta_k^2}\left\langle Y_{k,j},V_M y\right\rangle_{L^2(\mathbb{S}^2)}\right\|_{C(\mathbb{S}^2)}\\ \nonumber
&&=\left\|\sum_{k=0}^M\sum_{j=1}^{2k+1}h\left(\frac{k}{M}\right)Y_{k,j}(\cdot)\frac{\alpha\beta_k^2}{1+\alpha\beta_k^2}\left\langle Y_{k,j},y\right\rangle_{L^2(\mathbb{S}^2)}\right\|_{C(\mathbb{S}^2)},
\end{eqnarray*}
where in the last step we used $\langle Y_{k,j},V_M
y\rangle_{L^2(\mathbb{S}^2)}=h(k/M) \langle
Y_{k,j},y\rangle_{L^2(\mathbb{S}^2)}$.  Now using the Nikolskii
inequality (see, e.g., \cite{N2006}, Proposition 2.5) and also
$h(k/M)\le 1$, we obtain
\begin{eqnarray*}
%&&\left\|\sum_{k=0}^M\sum_{j=1}^{2k+1}h\left(\frac{k}{M}\right)Y_{k,j}(\cdot)\frac{\alpha\beta_k^2}{1+\alpha\beta_k^2}\left\langle Y_{k,j}(\cdot),y\right\rangle_{L^2(\mathbb{S}^2)}\right\|_{C(\mathbb{S}^2)} \\
\|(T_{0,M}-T_{\alpha,M}^{\beta})V_My\|_{C(\mathbb{S}^2)}&&\leq cM\left\|\sum_{k=0}^M\sum_{j=1}^{2k+1}h\left(\frac{k}{M}\right)Y_{k,j}\frac{\alpha\beta_k^2}{1+\alpha\beta_k^2}\left\langle Y_{k,j},y\right\rangle_{L^2(\mathbb{S}^2)}\right\|_{L^2(\mathbb{S}^2)} \\
&&= cM\left(\sum_{k=0}^M\sum_{j=1}^{2k+1}h\left(\frac{k}{M}\right)^2\left(\frac{\alpha\beta_k^2}{1+\alpha\beta_k^2}\right)^2\left|\left\langle Y_{k,j},y\right\rangle_{L^2(\mathbb{S}^2)}\right|^2\right)^{1/2}\\
%\end{eqnarray*}
%Using the assumption of the theorem that $y\in W^{\phi,\beta}$ we continue
%\begin{eqnarray*}
%&&cM\left(\sum_{k=0}^M\sum_{j=1}^{2k+1}\left(\frac{\alpha\beta_k^2}{1+\alpha\beta_k^2}\right)^2\left|\left\langle Y_{k,j}(\cdot),y\right\rangle_{L^2(\mathbb{S}^2)}\right|^2\right)^{1/2} \\
&&\le cM\left(\sum_{k=0}^M\sum_{j=1}^{2k+1}\left(\frac{\alpha\beta_k^2}{1+\alpha\beta_k^2}\right)^2\phi^2(\beta_k^{-2})\frac{\left|\left\langle Y_{k,j},y\right\rangle_{L^2(\mathbb{S}^2)}\right|^2}{\phi^2(\beta_k^{-2})}\right)^{1/2} \\
&&\leq
cM\sup_{u\in[0,\beta_0^{-2}]}\left|\frac{\alpha}{\alpha+u}\phi(u)\right|\left\|y\right\|_{W^{\phi,\beta}}\leq
cM\hat{\phi}(\alpha)\left\|y\right\|_{W^{\phi,\beta}},
\end{eqnarray*}
where the last inequality follows from \cite{L2013}, Proposition 2.7.
\qquad\end{proof}

It is instructive to note that if, for example, $\phi(t)=t^\nu$, then the
function $\hat{\phi}$ defined in Theorem \ref{th:apb} is given by
\[
\hat{\phi}(\alpha)=\begin{cases}
\alpha,\quad \quad \nu\ge 1,\\
\alpha^\nu, \quad 0<\nu<1.
\end{cases}
\]
Thus the error bound in the theorem does not improve if $\phi(t)$ grows
faster than $t$.

%-------------------------

\section{Parameter choice strategies}

In this section we will be concerned with the choice of the design
parameters for the least-squares approximation $y_M$, namely the
regularization parameter $\alpha$ and the penalization parameters
$\beta_k$. In the first subsection we discuss an \emph{a priori} choice
for the penalization parameters $\beta_k$.  In the next subsection we
consider an adaptive strategy for choosing the regularization
parameter $\alpha$. In the third subsection we present an \emph{a
posteriori} choice for the penalization parameters $\beta_k$.

The choice of parameters is motivated by the error bound
(\ref{eq:16}) for $y-y_M$. From (\ref{eq:20}) and (\ref{eq:21}) it
follows that the bound (\ref{eq:16}) can be reduced to the following:
\begin{eqnarray}\label{eq:26}
\left\|y-y_M\right\|&\leq& \left\|\epsilon\right\|_\infty+cM\hat{\phi}(\alpha)\left\|y\right\|_{W^{\phi,\beta}}+2\left\|\epsilon\right\|_\infty\left\|T_{\alpha,M}^{\beta}\right\|_{C(\mathbb{S}^2)} \\ \nonumber
&\leq&cM\hat{\phi}(\alpha)\left\|y\right\|_{W^{\phi,\beta}}+c\left\|\epsilon\right\|_\infty\left\|T_{\alpha,M}^{\beta}\right\|_{C(\mathbb{S}^2)}.
\end{eqnarray}
%where actually only the second term is accessible due to (\ref{eq:17}). Observe, the first term of the right-hand side in (\ref{eq:26}) grows with the growth of $\alpha$ because $\hat{\phi}$ is an increasing function, while in view of (\ref{eq:17}) the second term decreases. This observation makes it clear that the regularization parameter $\alpha$ should be chosen to balance these two terms in (\ref{eq:26}).

\subsection{A priori choice of the penalization parameters}

For definiteness, we assume in this subsection that $\phi(t)=t$, which
means that $\hat{\phi}$ has the highest order in $\alpha$, namely
$\hat{\phi}(\alpha)=\alpha$.   The error bound (\ref{eq:26}) now provides
useful guidance in the choice of the regularization parameters $\beta_k$.
If $\beta_0$ is considered to be fixed, and we increase the rate of growth
of the $\beta_k$, then the first term on the right-hand side of the
last line of (\ref{eq:26}) will increase, while from (\ref{eq:17})
the second term has an upper bound that decreases with increasing rate of
growth of the $\beta_k$. Even more can be said: for the first term to
be finite the $W^{\phi,\beta}$ norm of $y$ must be finite, which imposes
the constraint

\begin{equation}\label{eq:22}
\sum_{k=0}^\infty\sum_{j=1}^{2k+1}\beta_k^4\left\langle Y_{k,j},y\right\rangle^2_{L^2(\mathbb{S}^2)}<\infty.
\end{equation}
To see what this condition means in a particular application, we consider
the SGG-problem mentioned in the Introduction. In this problem $y$ is
the second order radial derivative of the gravitational potential measured
pointwise at the orbital sphere of a satellite. It can be shown
\cite{F2001,L2010,S1983} that after a proper normalization of this sphere
to $\mathbb{S}^2$ we have

\begin{equation}\label{eq:23}
\left\langle Y_{k,j},y\right\rangle_{L^2(\mathbb{S}^2)}=a_kg_{k,j},
\end{equation}
where $a_k=\left(\frac{R}{\rho}\right)^k\frac{(k+1)(k+2)}{\rho^2}$, $\rho$ is the radius of the orbital sphere, $R$ is the radius of the surface of the Earth considered as a sphere, and $\left\{g_{k,j}\right\}$ is some (unknown) sequence of scaled Fourier coefficients of the gravitational potential $g$ measured at the surface of the Earth. It is well-known (see, e.g., \cite{S1983a, F2001}) that in the scale of the spherical Sobolev spaces $\left\{H_s\right\}$ mentioned above the Earth's gravitational potential has a smoothness index $s=3/2$ at least, which means that the sequence $\left\{g_{k,j}\right\}$  should satisfy the requirement

\[
\sum_{k=0}^\infty\sum_{j=1}^{2k+1}(k+1/2)^3g^2_{k,j}<\infty.
\]

In view of the last requirement, the condition (\ref{eq:22}) is
satisfied by the choice
\begin{equation}\label{eq:25}
\beta_k=a_k^{-1/2}(k+1/2)^{3/4},\ \ k=0,1,... .
\end{equation}
Of course the condition (\ref{eq:22}) will also be satisfied if the
$\beta_k$ increase more slowly, but at the likely expense of a larger
second term in the error bound (\ref{eq:26}).

Since $R<\rho$, it is clear that the $\beta_k$ given by
(\ref{eq:25}) increase exponentially. This is natural in view of the exponential decrease of the Fourier coefficients (\ref{eq:23}) of the approximated function, which implies that the exact function as measured at the satellite height is very smooth, even analytic. The regularization scheme (\ref{leastsquares}) with weights (\ref{eq:25}) will penalize the presence of oscillating coefficients with large indexes in the approximant $T_{\alpha,M}^\beta y^\epsilon$. In the last section we illustrate a
good performance of the scheme (\ref{leastsquares}) with these
penalization weights.

\subsection{Regularization parameter choice strategy}
For regularization of our problem we will implement an adaptive
regularization parameter choice strategy known as the balancing principle
(see, e.g., \cite{L2013, M2003, P2005} and references therein). In
this method the regularization parameter $\alpha$ is selected from some
finite set, say $\Delta_L:=\left\{\alpha_i=q^i\alpha_0,
i=1,2,...,L\right\}$, with $q\in (0,1)$ and $L$ large enough.

Applying the balancing principle to our problem we start with the smallest
parameter $\alpha_L$ and increase stepwise $\alpha_{i-1}=\alpha_i/q,
i=L,L-1,...,$ until $\alpha_*:=\alpha_z$ is the parameter for which

\[
\left\|T_{\alpha_z,M}^\beta y^\epsilon-T_{\alpha_{z+1},M}^\beta y^\epsilon\right\|_{C(\mathbb{S}^2)}>{\ae}\left\|\epsilon\right\|_\infty\left\|T_{\alpha_{z+1},M}^\beta\right\|_{C(\mathbb{S}^2)}
\]
for the first time. Here ${\ae}$ is a design parameter. In all our numerical tests with the balancing principle (BP) reported below in Section 6, the value of ${\ae}$ is fixed as ${\ae}=0.002$ and is data independent, while the value of the regularization parameter $\alpha_*$ chosen according to BP varies with data.  

Note that for choosing $\alpha_*$ we need only the knowledge of (\ref{eq:14}) and an
upper bound of $\left\|T_{\alpha,M}^{\beta}\right\|_{C(\mathbb{S}^2)}$
given by (\ref{eq:17}).

In the Section 6 we will present a numerical test showing a good
reconstruction of the function on the sphere from noisy observations with
the above \emph{a posteriori} regularization parameter. It is instructive to see that in all tests BP performs at the level of the ideal parameter choice $\alpha\in\Delta_L$.

\subsection{A posteriori choice of the penalization weights}

We start with the observation that the space  $\mathbb{P}_M$ of spherical
polynomials $p$ is a reproducing kernel Hilbert space (RKHS)
$\mathcal{H}$. By the Riesz representation theorem, to every RHKS
$\mathcal{H}$ there corresponds a unique symmetric positive definite
function $K:\mathbb{S}^2\times\mathbb{S}^2\rightarrow \mathbb{R}$, called
the reproducing kernel of $\mathcal{H}=\mathcal{H}_K$, that has the
following reproducing property: $p(\mathbf{x})=\left\langle
p(\cdot),K(\cdot,\mathbf{x})\right\rangle_{\mathcal{H}_K}$. A
comprehensive theory of RKHSs can be found in \cite{A1950}.

It is easy to check that the kernel

\begin{equation}\label{eq:a}
K(\mathbf{x},\mathbf{z})=\sum_{k=0}^M \beta_k^{-2}\sum_{j=1}^{2k+1} Y_{k,j}(\mathbf{x}) Y_{k,j}(\mathbf{z}),\ \mathbf{x}, \mathbf{z}\in\mathbb{S}^2
\end{equation}
has the above mentioned reproducing property if the inner product in  $\mathbb{P}_M$ is defined as follows

\[
\left\langle f,g \right\rangle_{\mathcal{H}_K}=\sum_{k=0}^M \beta_k^2\sum_{j=1}^{2k+1} \left\langle Y_{k,j},f\right\rangle_{L^2(\mathbb{S}^2)} \left\langle Y_{k,j},g\right\rangle_{L^2(\mathbb{S}^2)}.
\]
Indeed, for $p\in\mathbb{P}_M$ we find

\begin{eqnarray*}
\left\langle p(\cdot),K(\mathbf{x},\cdot) \right\rangle_{\mathcal{H}_K}&=&\sum_{k=0}^M \beta_k^2\sum_{j=1}^{2k+1} \left\langle Y_{k,j},p\right\rangle_{L^2(\mathbb{S}^2)} \left\langle Y_{k,j},K(\mathbf{x},\cdot)\right\rangle_{L^2(\mathbb{S}^2)} \\ \nonumber
%&=&\sum_{k=0}^M \beta_k^2\sum_{j=1}^{2k+1} \left\langle Y_{k,j},p\right\rangle_{L^2(\mathbb{S}^2)} \left\langle Y_{k,j},\sum_{l=0}^M \beta_l^{-2}\sum_{i=1}^{2l+1} Y_{l,i}(\cdot) Y_{l,i}(\mathbf{x})\right\rangle_{L^2(\mathbb{S}^2)} \\ \nonumber
&=&\sum_{k=0}^M \beta_k^2\sum_{j=1}^{2k+1}\left\langle Y_{k,j},p\right\rangle_{L^2(\mathbb{S}^2)}\beta_k^{-2}Y_{k,j}(\mathbf{x})=p(\mathbf{x}).
\end{eqnarray*}
%in the second last step using $\langle
%Y_{k,j},Y_{\ell,i}\rangle_{L^2(\mathbb{S}^2)} =
%\delta_{k,\ell}\delta_{j,i}$.

In this RKHS setting the spherical polynomial $y_M=y_M(N,K,\alpha)$
defined by (\ref{leastsquares}) also can be seen, using the addition
theorem and (\ref{cubature}), as the minimizer of the following
quadratic functional

\begin{equation}\label{eq:b}
T_\alpha(N,K;p)=\sum_{i=1}^Nw_i(p(\mathbf{x}_i)-y^\epsilon(\mathbf{x}_i))^2+\alpha\left\|p\right\|^2_{\mathcal{H}_K},\  p\in\mathbb{P}_M,
\end{equation}
which makes (\ref{eq:a}) a natural way of defining the reproducing
kernel in this context.

At this point the problem of the choice of the penalization weights
$\left\{\beta_k\right\}$ is transformed into that of selecting a
kernel $K$ from the set $\mathcal{K}$ of kernels of the form (\ref{eq:a}).

In the literature there are several methods for choosing a kernel from the
available set of kernels (see, e.g., \cite{M2005, N2011}, and references
therein). For example, in \cite{M2005} the authors suggest selecting
a kernel by minimizing the  value of the functional (\ref{eq:b}) evaluated
at its minimizer $y_M$. In the present context, according to \cite{M2005},
the kernel $K=K_*$ of choice is given as

\[
K_*=\arg \min\left\{T_\alpha(N,K;y_M(N,K,\alpha)),\ K\in\mathcal{K}\right\}.
	\] Note that such $K_*$ depends on the value of the regularization
parameter $\alpha$. Therefore, the approach \cite{M2005} can be realized
only for an \emph{a priori} known $\alpha$. However, in practice we
are not provided with this knowledge, and have to use \emph{a posteriori}
regularization parameter choice strategies (for example, the balancing
principle described in Subsection 4.1). Thus, in practice we are dealing
with kernel dependent regularization parameter $\alpha=\alpha(K)$.

This situation has been discussed in \cite{N2011}. In the present context the kernel choice suggested in \cite{N2011} can be written as follows

\begin{equation}\label{eq:c}
K_+=\arg \min\left\{T_{\alpha(K)}(N,K;y_M(N,K,\alpha(K))),\ K\in\mathcal{K}\right\}.
\end{equation}
The existence of such $K_+$ has been proved in \cite{N2011} under rather general assumptions on the set of admissible kernels $\mathcal{K}$ and regularization parameter choice strategy $\alpha=\alpha(K)$.

From a practical point of view, it is a challenging issue to use the
strategy (\ref{eq:c}) in our case  because one has to minimize a function
depending on $M+1$ unknown penalization weights $\beta_k$. Therefore, it
is natural to reduce the complexity of the model before applying the
strategy from  \cite{N2011}.

For example, one may parametrize $\left\{\beta_k\right\}$ as follows: $\beta_k^2=e^{\lambda_1(k+1)}(k+1)^{\lambda_2}, \lambda_1,\lambda_2\geq 0$. In other words, in (\ref{eq:c}) the set of kernels $\mathcal{K}$ consists of the functions

\begin{equation}\label{eq:d}
K(t,\tau)=\sum_{k=0}^M e^{-\lambda_1(k+1)}(k+1)^{-\lambda_2}\sum_{j=1}^{2k+1} Y_{k,j}(t) Y_{k,j}(\tau),\ t,\tau\in\mathbb{S}^2.
\end{equation}

Then the kernel $K_+$ can be found by minimizing a function of two
variables $\lambda_1,\lambda_2$. In the last section we will illustrate
such a reduced approach by a numerical test showing good performance of
the scheme (\ref{leastsquares}) with \emph{a posteriori} chosen penalization
weights.

%--------------------------

% Section

%-------------------------

\section{Numerical examples}

In this section we present some numerical experiments to verify the
analysis from the previous sections. Note that we work not with real
data but with artificially generated ones. In all our experiments we
follow \cite{G2009, N2013} and assume that the set of points $X_N$ is the
set of Gauss-Legendre points, for which the positive
quadrature weights are known analytically. The number of points in this
case is $N=2(M+1)^2$, and the corresponding cubature formula
(\ref{cubature}) is indeed exact for all spherical polynomials of
degree $2M$. In all our experiments $M=30$.

Note that in real applications the spherical polynomials of much higher degree are used \cite{P2008}. Moreover, the Gauss-Legendre points are known to have the drawback of having too many points concentrated at the poles, making it not suitable for real satellite data. In our experiments below we use the Gauss-Legendre points and polynomials of modest degree only for illustration purposes and as a proof of concept. At the same time, we note that even for the case $M=30$ the corresponding discrete problem is rather ill-conditioned and, thus, should be treated with a regularization (see Figure \ref{fig:1} and the discussion below).

We start with an experiment illustrating that a proper choice of the
penalization weights $\beta_0,...,\beta_M$ is crucial for the
approximation of functions on the sphere. Consider again the SGG-problem
corresponding to (\ref{eq:23}). Note that for $k=1,2,...,30$ the values
$a_k=\left(\frac{R}{\rho}\right)^k \frac{(k+1)(k+2)}{\rho^2}$ in
(\ref{eq:23}) are increasing, and so, they do not exhibit a typical
behavior of the singular values of the compact operators. This effect is
well-known (see, e.g, \cite{F1999}, Fig. 4.2.3, p. 280).

Therefore, to mimic the SGG-problem for $M=30$ one usually omits the factor $\frac{(k+1)(k+2)}{\rho^2}$ (see, e.g., \cite{B2007}). In this case the decay character of the
coefficients $a_k$ in (\ref{eq:23}) can be modeled, for example, as

\[
a_k=(1.2)^{-k},\ k=0,1,...,M.
	\]
We conduct our first experiment in the
following way. First we generate a spherical function

\[
y=y(\mathbf{x})=\sum_{k=0}^M (1.2)^{-k}\sum_{j=1}^{2k+1}g_{k,j}\frac{1}{\rho}Y_{k,j}(\mathbf{x}),\ \ \mathbf{x}\in\mathbb{S}^2,
\]
where $g_{k,j}=(k+1/2)^{-3/2}x_{k,j}, \ k=0,...,M, \ j=1,...2k+1$, and
$x_{k,j}$ are random numbers uniformly distributed on
$\left[0,1\right]$. The blurred spherical function $y^\epsilon$ is
simulated by adding a random point-wise noise to the values of the initial
function $y$ at the point set $X_N$. The simulated noise values are given as the components of a random vector $0.05\epsilon / \left\|\epsilon\right\|_\infty$, where $\epsilon=\left[\epsilon_1,\epsilon_2,...,\epsilon_N\right]$, and $\epsilon_i$ are uniformly distributed on $[-1,1]$. To mimic the SGG-problem we reconstruct the
vector $g=(g_{k,j})$ by $g^{\alpha,M}=(g_{k,j}^{\alpha,M})$, where
$g_{k,j}^{\alpha,M}=a_k^{-1}\gamma_{k,j}$, and $\gamma_{k,j}$ are given by
(\ref{eq:13}). 	
\begin{figure*}
  \includegraphics[width=\textwidth]{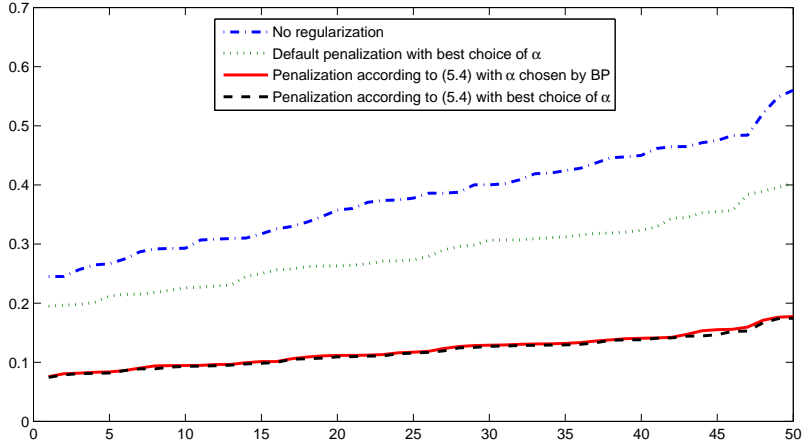}
\caption{Numerical illustration. The figure presents relative errors for 50 simulations of the data. The errors are plotted in ascending order for each of the discussed methods. Note that two bottom curves corresponding to penalization according to (\ref{eq:25}) nearly overlap.} \label{fig:1}
\end{figure*}	

To assess the obtained results and compare the performance of the considered schemes we measure the relative error

\[
\frac{\left\|g-g^{\alpha,M}\right\|_2}{\left\|g\right\|_2}.
	\]

The results are displayed in Figure \ref{fig:1},where along the vertical axis we plot the relative errors in solving the problem with one of 50 simulated data. The relative errors are plotted in ascending order for each of four methods: a straightforward least-squares fit to noisy data without any regularization, the regularization with the penalization weights (\ref{eq:25}) and $\alpha$ chosen according to the balancing principle (BP) from $\Delta_{60}=\left\{\alpha_i=8\cdot q^i\  i=1,2,\ldots,60\right\}, q=0.8$, the regularization with default penalization weights $\beta_k=1, k=0,1,\ldots,M,$ and the best $\alpha\in\Delta_{60}$, the regularization with the penalization weights (\ref{eq:25}) and the best $\alpha\in\Delta_{60}$. Thus, in the latter two cases the choice of the regularization parameter $\alpha$ for both schemes was
made to achieve the best possible performance of each method. As it can be seen from Figure \ref{fig:1} the balancing principle (BP) performs at the level of the ideal parameter choice strategy.

From Figure \ref{fig:1} one can also conclude that the proper choice of the penalization weights according to the proposed \emph{a priori} recipe can significantly improve the accuracy of the reconstruction. Moreover, Figure \ref{fig:1} shows that a straightforward least-squares fit to noisy data without regularization leads to the relative error that is about 2-3 times larger than that after a regularization. This confirms that in the considered experiment we are really dealing with a rather ill-conditioned problem.

In our second experiment we again confirm that the balancing principle gives a value of the regularization parameter $\alpha_*$ that is competitive with the best value manually found in \cite{A2012}. We choose the regularization parameter from the same geometric sequence $\Delta_{60}$ and use the same value of the design parameter $\ae=0.002$ in BP.

\begin{figure}
        \centering
        \begin{subfigure}[b]{0.45\linewidth}
                \includegraphics[width=\textwidth]{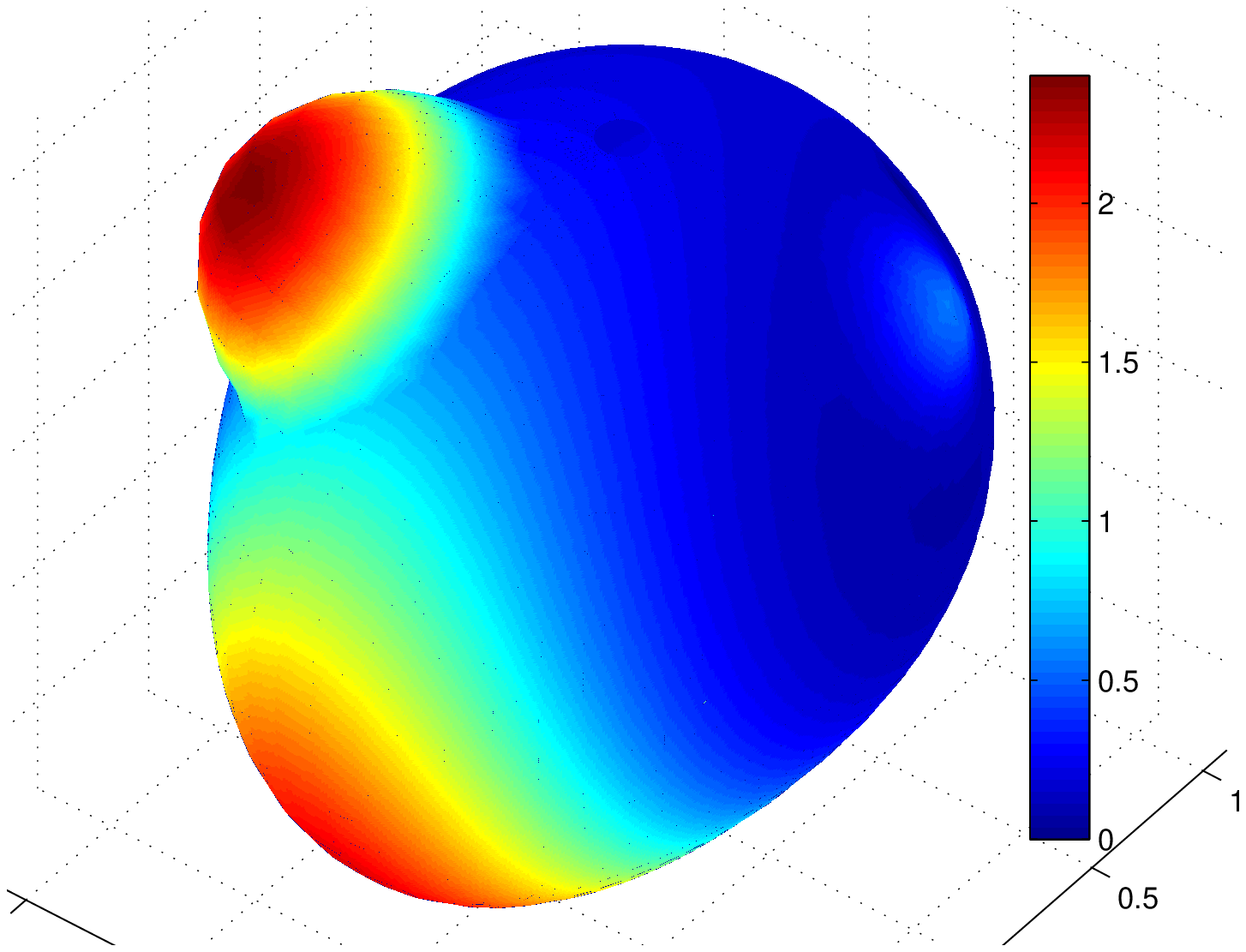}
                \caption{$y$}
                \label{fig:fr1}
        \end{subfigure}%
      \quad
        \begin{subfigure}[b]{0.45\linewidth}
                \includegraphics[width=\textwidth]{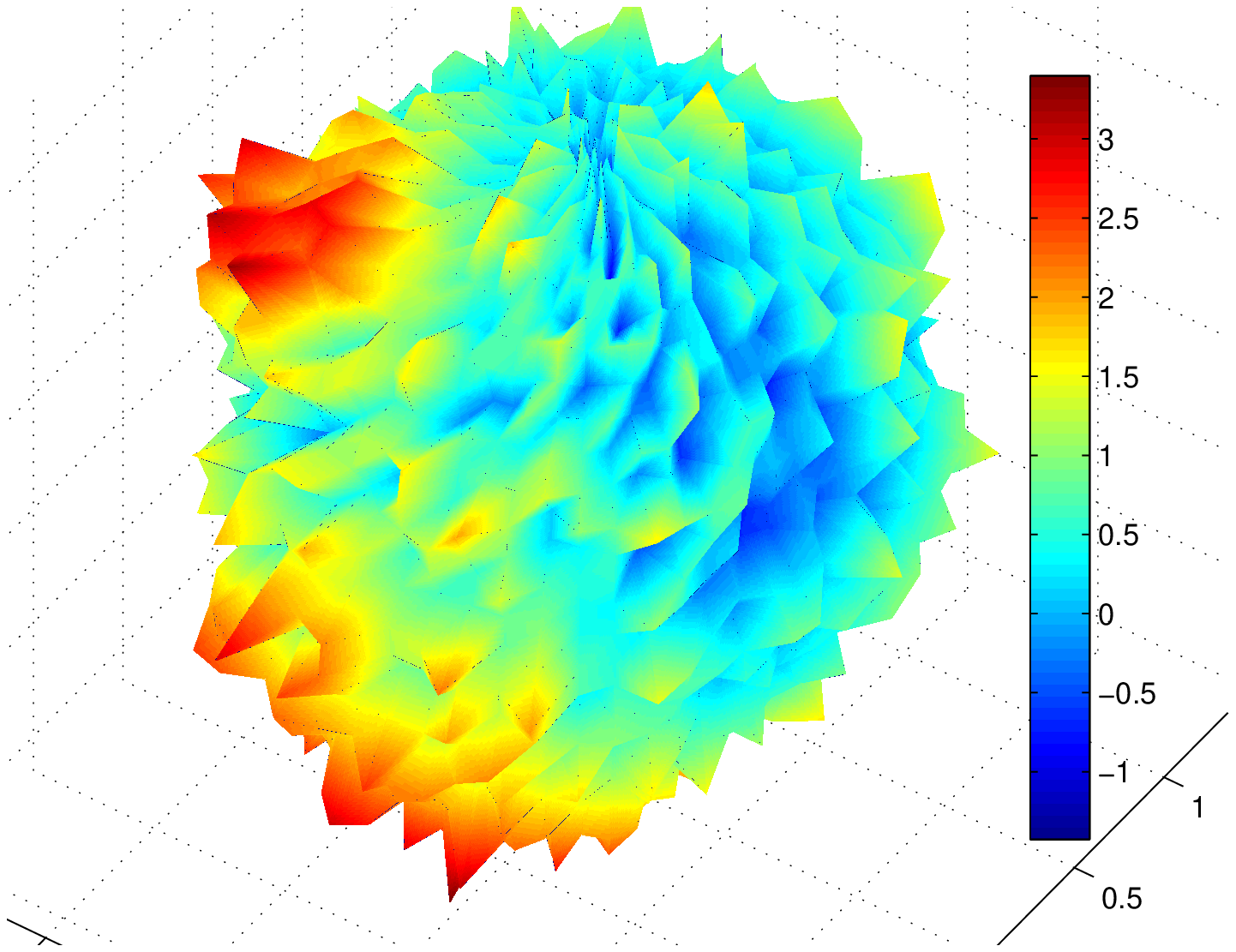}
                \caption{$y$ with $N(0,0.25)$ noise}
                \label{fig:fr2}
        \end{subfigure}
    \\
        \begin{subfigure}[b]{0.5\linewidth}
                \includegraphics[width=\textwidth]{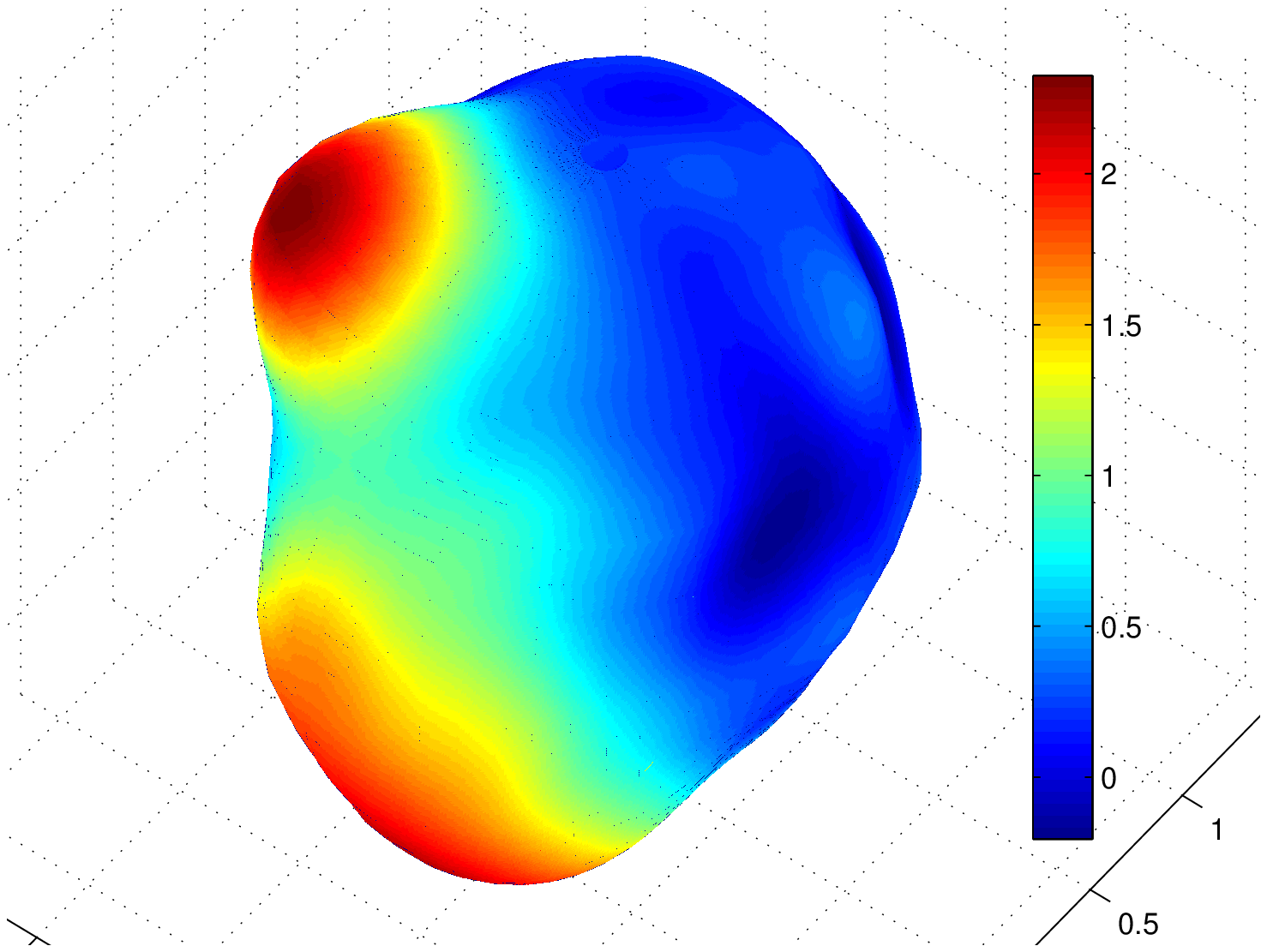}
                \caption{$T_{\alpha_*,M}y^\epsilon$}
                \label{fig:fr3}
        \end{subfigure}
        \caption{Franke function recovery}\label{fig:fr}
\end{figure}

Similarly to \cite{A2012}, as a test function $y$ we take the sum of
the Franke function $y_1$ modified by Renka \cite{R1988} (p.146) and a
function $y_{\rm{cap}}$ \cite{W1992}, namely
$y=y_1+y_{\rm{cap}}$ with

\begin{eqnarray}\label{eq:e}
y_1(x_1,x_2,x_3)&=&0.75e^{-(9x_1-2)^2/4-(9x_2-2)^2/4-(9x_3-2)^2/4} \\ \nonumber
&+&0.75e^{-(9x_1+1)^2/49-(9x_2+1)/49-(9x_3+1)/10} \\ \nonumber
&+&0.5e^{-(9x_1-7)^2/4-(9x_2-3)^2/4-(9x_3-5)^2/4}\\ \nonumber
&-&0.2e^{-(9x_1-4)^2-(9x_2-7)^2-(9x_3-5)^2},\ (x_1,x_2,x_3)\in\mathbb{S}^2,
\end{eqnarray}
and

\begin{equation}\label{eq:f}
y_{\rm{cap}}(\mathbf{x}) = \left\{
  \begin{array}{l l}
    2\cos\left(\pi\arccos(\mathbf{x}_c\cdot\mathbf{x})\right), & \quad \mathbf{x}_c\cdot\mathbf{x}\geq\cos(0.5),\\
    0, & \quad \rm{otherwise},
  \end{array} \right.
\end{equation}
where
$\mathbf{x}_c=\left(-\frac{1}{2},-\frac{1}{2},\frac{1}{\sqrt{2}}\right)^T$
and $(\cdot)$ defines the dot product of two vectors. The function $y$
was then contaminated by noise,  taking for the noise
$\epsilon(\mathbf{x})$ at each $\mathbf{x}\in X_N$ an independent sample
of a normal random variable with mean 0 and standard deviation
$\sigma=0.5$.

Figure \ref{fig:fr1} illustrates the function $y$, while Figure
\ref{fig:fr2} shows the blurred function
$y^\epsilon(\mathbf{x})=y(\mathbf{x})+\epsilon(\mathbf{x})$.

For the reconstruction, following \cite{A2012} we choose a
Laplace-Beltrami penalization operator that corresponds to the matrix

\[
\mathbf{B_M}:=\diag(0,\underbrace{4,4,4}_{3},...,\underbrace{\left(M(M+1)\right)^2,...,\left(M(M+1)\right)^2}_{2M+1})\in\mathbb{R}^{(M+1)^2\times(M+1)^2}.
\]

Figure \ref{fig:fr3} illustrates the reconstructed function
$T_{\alpha_*,M}^\beta y^\epsilon$. The regularization parameter $\alpha_*$
was obtained according to the balancing principle described above. We
found automatically the regularization parameter $\alpha_*=1.42\cdot
10^{-4}$ which agrees well with the value $10^{-4}$ from \cite{A2012}
obtained manually.

\begin{figure*}
  \includegraphics[width=\textwidth]{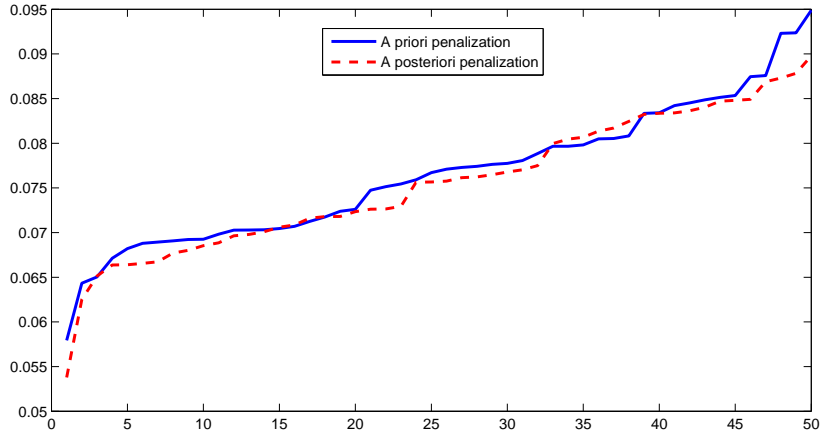}
\caption{Numerical illustration. The figure presents relative errors for 50 simulations of the data. The errors are plotted in ascending order for each of the discussed methods.} \label{fig:3}
\end{figure*}	

In our last experiment we will illustrate an application of the \emph{a
posteriori} rule (\ref{eq:c}) for choosing the penalization weights. As a
test function $y^\epsilon$ we again consider the blurred function from the
previous example, where we used the \emph{a priori} chosen penalization
weights $\beta_k=k(k+1)$ corresponding to Laplace-Beltrami operator. Now
we are going to estimate the penalization weights using the \emph{a
posteriori} strategy described in Subsection 4.3.

Recall that we are looking for the minimizer (\ref{eq:c}) among the set of
admissible kernels $\mathcal{K}$ consisting of the functions (\ref{eq:d}).
This approach allows us to take into account an exponential, as
well as a polynomial growth of $\beta_k$.

To find an approximate minimizer of (\ref{eq:c}) we have implemented
the Random Search method \cite{M1965} over the set of parameters
$(\lambda_1,\lambda_2)\in \left[0,5\right]\times\left[0,5\right]$. The
method was implemented 10 times, and in each implementation 10 random
steps have been performed. Then the mean values of the parameters
$\lambda_1, \lambda_2$ appearing after each implementation of the
Random Search method have been taken as an approximate minimum point.
As the result, the values $\lambda_1=0.32, \lambda_2=1.9$ have been
obtained.

Figure \ref{fig:3} displays the relative errors in solving the problem
(\ref{eq:e}), (\ref{eq:f}) with one of 50 simulated noisy data, for each
of two methods: regularization with the penalization weights
$\beta_k=k(k+1)$, and regularization with \emph{a posteriori} chosen
weights.

From Figure \ref{fig:3} we see that the choice of the penalization
weights according to the proposed \emph{a posteriori} choice rule can
improve the accuracy of the reconstruction.

\section*{Acknowledgments}
The first and the third authors are supported by the Austrian Fonds Zur
Forderung der Wissenschaftlichen Forschung (FWF), grant P25424. The work
was initiated when the second author visited Johann Radon Institute for
Computational and Applied Mathematics (RICAM) within the Special Semester
on Applications of Algebra and Number Theory.  The second author
acknowledges the support of the Australian Research Council.

\end{document}